\newtheorem{theorem}{Theorem}[section]
\newtheorem{corollary}[theorem]{Corollary}
\newtheorem{proposition}[theorem]{Proposition}
\theoremstyle{definition}
\DeclareMathOperator{\Dend}{\mathsf{Dend}}
\DeclareMathOperator{\Ass}{\mathsf{Ass}}
\DeclareMathOperator{\Poisson}{\mathsf{Poisson}}
\DeclareMathOperator{\Com}{\mathsf{Com}}
\DeclareMathOperator{\PL}{\mathsf{PreLie}}
\DeclareMathOperator{\PP}{\mathsf{PrePoisson}}
\DeclareMathOperator{\Perm}{\mathsf{Perm}}
\DeclareMathOperator{\Leib}{\mathsf{Leib}}
\DeclareMathOperator{\Dias}{\mathsf{Diass}}
\DeclareMathOperator{\Lie}{\mathsf{Lie}}
\DeclareMathOperator{\Ab}{Ab}
\DeclareMathOperator{\gr}{gr}
\DeclareMathOperator{\END}{\mathbf{End}}
\DeclareMathOperator{\Mod}{Mod}
\DeclareMathOperator{\Set}{\mathbf{Set}}
\DeclareMathOperator{\birc}{\bar{\circ}}
\DeclareMathOperator{\bdot}{\bar{\cdot}}
\DeclareMathOperator{\Vect}{\mathbf{Vect}}
\DeclareMathAlphabet{\mathbbold}{U}{bbold}{m}{n}
\newcommand{\bbk}{\mathbbold{k}}
\newcommand{\scrF}{\mathscr{F}}
\newcommand{\scrG}{\mathscr{G}}
\newcommand{\scrL}{\mathscr{L}}
\newcommand{\scrM}{\mathscr{M}}
\newcommand{\scrN}{\mathscr{N}}
\newcommand{\scrP}{\mathscr{P}}
\newcommand{\scrR}{\mathscr{R}}
\newcommand{\scrX}{\mathscr{X}}
\newcommand{\sfC}{\mathsf{C}}
\newcommand{\sfD}{\mathsf{D}}
\newcommand{\sfS}{\mathsf{S}}
\newcommand{\sfT}{\mathsf{T}}
\begin{document}

\title{Endofunctors and Poincar\'e--Birkhoff--Witt theorems}

\author{Vladimir Dotsenko}

\address{Institut de Recherche Math\'ematique Avanc\'ee, UMR 7501\\ Universit\'e de Strasbourg et CNRS\\ 7 rue Ren\'e-Descartes, 67000 Strasbourg, France}
\email{vdotsenko@unistra.fr}

\author{Pedro Tamaroff}

\address{School of Mathematics, Trinity College, Dublin 2, Ireland}

\email{pedro@maths.tcd.ie}

\subjclass[2010]{16D90 (Primary), 16S30, 17B35, 18D50 (Secondary)}

\keywords{endofunctor, monad, universal enveloping algebra, Poincar\'e--Birkhoff--Witt theorem}

\begin{abstract}
We determine what appears to be the bare-bones categorical framework for Poincar\'e--Birkhoff--Witt type theorems about universal enveloping algebras of various algebraic structures. Our language is that of endofunctors; we establish that a natural transformation of monads enjoys a Poincar\'e--Birkhoff--Witt property only if that transformation makes its  codomain a free right module over its domain. We conclude with a number of applications to show how this unified approach proves various old and new Poincar\'e--Birkhoff--Witt type theorems. In particular, we prove a PBW type result for universal enveloping dendriform algebras of pre-Lie algebras, answering a question of Loday. 
\end{abstract}

\maketitle

\section{Introduction}

It is well known that the commutator $[a,b]=ab-ba$ in every associative algebra satisfies the Jacobi identity. Thus, every associative algebra may be regarded as a Lie algebra, leading to a functor from the category of associative algebras to the category of Lie algebras assigning to an associative algebra the Lie algebra with the same underlying vector space and the Lie bracket as above. This functor admits a left adjoint $U(-)$, the universal enveloping associative algebra of a Lie algebra. The classical Poincar\'e--Birkhoff--Witt (PBW) theorem identifies the underlying vector space of the universal enveloping algebra of any  Lie algebra with its symmetric algebra; the precise properties of such an identification depend on the proof one chooses.  

More generally, a functor  from the category of algebras of type $\sfS$ to the category of algebras of type~$\sfT$ is called a functor of change of structure if it only changes the structure operations, leaving the underlying object of an algebra intact. Informally, one says that such a functor has the PBW property if, for any $\sfT$-algebra $A$, the underlying object of its universal enveloping $\sfS$-algebra  $U_\sfS(A)$ admits a description that does not depend on the algebra structure, but only on the underlying object of~$A$. 

This intuitive view of the PBW property is inspired by the notion of a PBW pair of algebraic structures due to Mikhalev and Shestakov~\cite{MiSh}. There, the algebraic setup is that of varieties of algebras. The authors of~\cite{MiSh} define, for any $\sfT$-algebra~$A$, a canonical filtration on the universal enveloping algebra~$U_\sfS(A)$ which is compatible with the $\sfS$-algebra structure, and establish that there is a canonical surjection 
 \[
\pi\colon U_\sfS(\Ab A)\twoheadrightarrow\gr U_\sfS(A) , 
 \]
where $\Ab A$ is the Abelian $\sfS$-algebra on the underlying vector space of~$A$. They say that the given algebraic structures  form a PBW pair if that canonical surjection is an isomorphism. Furthermore, they prove a result stating that this property is equivalent to $U_\sfS(A)$ having a basis of certain monomials built out of the basis elements of~$A$, where the definition of monomials  does not depend on a particular algebra~$A$. This latter property is defined in a slightly more vague way than the former one; trying to formalise it, we discovered a pleasant categorical context where PBW theorems belong. The approach we propose is to use the language of endofunctors, so that a fully rigorous way to say ``the definition of monomials  does not depend on a particular algebra'' is to say that the underlying vector space of~$U_\sfS(A)$ is isomorphic to $\scrX(A)$, where $\scrX$ is an endofunctor on the category of vector spaces, with isomorphisms $U_\sfS(A)\cong\scrX(A)$ being natural with respect to algebra maps.  Our main result (Theorem~\ref{th:PBWNat}) states that if algebraic structures are encoded by monads, and a functor of change of structure arises from a natural transformation of monads $\phi\colon\scrM\to\scrN$, then the PBW property holds if and only if the right module action of~$\scrM$ on~$\scrN$ via~$\phi$ is free; moreover the space of generators of $\scrN$ as a right $\scrM$-module is naturally isomorphic to the endofunctor $\scrX$ above. 

In the context of the classical PBW theorem for Lie algebras, the condition of freeness of a module does emerge in a completely different way: when working with Lie algebras over rings, one would normally require the Lie algebra to be free as a module over the corresponding ring in order for the PBW theorem to hold, see~\cite{CNS}. We feel as though we have to emphasize that our ``freeness of a module'' condition is of entirely different nature: it is freeness of action of one monad on another, which only makes sense when one goes one level up in terms of categorical abstraction and considers all algebras of the given type as modules over the same monad. This condition is not expressible if one looks at an individual algebra, and this is precisely what makes our main result completely new in comparison with existing literature on PBW type theorems. It is also worth mentioning that one class of operads for which the free right module condition is almost tautologically true is given by those obtained by means of distributive laws \cite{Markl}; however, for many interesting examples it is definitely not the case. (The example of post-Poisson algebras in the last section of this paper should be very instructional for understanding that.)

It is worth remarking that there is a number of other phenomena which are occasionally referred to as PBW type theorems. One of them deals with various remarkable families of associative algebras depending on one or more parameters, and is completely out of our scope; we refer the reader to the survey \cite{ShWi} for further information. The other one deals with universal enveloping algebras defined as forgetful functors as above, but considers situations where the universal enveloping algebra admits what one would agree to consider a ``nice'' description. One important feature of such a ``nice'' description is what one can call a ``baby PBW theorem'' stating that the natural map from an algebra to its universal enveloping algebra is an embedding. By contrast with our result which in particular shows that the PBW property holds for all algebras if and only if it holds for free algebras, checking the baby PBW property requires digging into intricate properties of individual algebras: there exist examples of algebraic structures for which the baby PBW property holds for all free algebras but nevertheless fails for some non-free algebras.  A celebrated example where the baby PBW property holds but the full strength PBW property is not available is given by universal enveloping diassociative algebras of Leibniz algebras~\cite{Lo01}; further examples can be found in \cite{CIL,Gu2} and \cite{Bo1,Bo3}. 
 
We argue that our result, being a necessary and sufficient statement, should be regarded as \emph{the} bare-bones framework for studying the PBW property; as such, it provides one with a unified approach to numerous PBW type results proved ---sometimes by very technical methods--- in the literature, see e.g.~\cite{Ch1,CNS,Gu3,Kol1,Kol2,MoPeSh,Pe07,Ro65,Se94,Sto93}. Most of those  PBW type theorems  tend to utilise something extrinsic; e.g., in the case of Lie algebras, one may consider only Lie algebras associated to Lie groups and identify the universal enveloping algebra with the algebra of distributions on the group supported at the unit element (see~\cite{Se64}, this is probably the closest in spirit to the original proof of Poincar\'e~\cite{Po00}), or use the additional coalgebra structure on the universal enveloping algebra (like in the proof of Cartier~\cite{Car55}, generalised by Loday in~\cite{Lo08} who defined a general notion of a ``good triple of operads''). 
Proofs that do not use such \emph{deus ex machina} devices normally rely on an explicit presentation of universal enveloping algebras by generators and relations (following the most famous application of Bergman's Diamond Lemma~\cite{Be78}, in the spirit of proofs of Birkhoff~\cite{Bi37} and Witt~\cite{Wi37}); while very efficient, those proofs break functoriality in a rather drastic way, which is highly undesirable for objects defined by a universal property. 
Finally, what is often labelled as a categorical approach to the PBW theorem refers to proving the PBW theorem for Lie algebras in an arbitrary $\bbk$-linear tensor category (over a field~$\bbk$ of characteristic zero) recorded in~\cite{DM}; this approach is indeed beautifully functorial but does not at all clarify what property of the pair of algebraic structures $(\Lie,\Ass)$ makes it work. Our approach, in addition to being fully intrinsic and functorial, unravels the mystery behind that very natural question.

Towards the end of this paper, we present a few applications of our framework. In particular, we prove a new PBW theorem for universal enveloping dendriform algebras of pre-Lie algebras (Theorem~\ref{th:PreLieDend}), thus answering a question of Loday that remained open for a decade. The proof of that result demonstrates that our monadic approach to PBW type theorems opens a door for utilising a range of operadic techniques which previously were mainly used for purposes of homotopical algebra \cite{BrDo,LV}. Another application of our main result was recently obtained in \cite{Khor} where a PBW type theorem for associative universal enveloping algebras of operadic algebras is proved; a hint for importance of operadic right modules for such a statement to hold can be found in \cite[Sec.~10.2]{Fr09}.  

To conclude this introduction, it is perhaps worth noting that our definition of the PBW property exhibits an interesting ``before/after'' dualism with that of~\cite{MiSh}: that definition formalises the intuitive notion that ``operations on~$A$ do not matter before computing $U_\sfS(A)$'', so that operations on~$U_\sfS(A)$ have some canonical ``leading terms'', and then corrections that do depend on operations of~$A$, while our approach suggests that ``operations on~$A$ do not matter after computing~$U_\sfS(A)$'', so that the underlying vector space of $U_\sfS(A)$ is described in a canonical way. In Proposition~\ref{prop:CharP}, we show that our formalisation, unlike that of~\cite{MiSh}, shows that the extent to which a PBW isomorphism may be functorial depends on the characteristic of the ground field, rather than merely saying ``certain strategies of proof are not available in positive characteristic''.

\subsection*{Acknowledgements}
We thank Dmitry Kaledin and Ivan Shestakov for extremely useful and encouraging discussions of this work. These discussions happened when the first author was visiting CINVESTAV (Mexico City); he is grateful to Jacob Mostovoy for the invitation to visit and to present this work. We also thank Vsevolod Gubarev, Pavel Kolesnikov and Bruno Vallette for useful comments, and Anton Khoroshkin for informing us of the preprint~\cite{Khor} that builds upon our work. Special thanks due to Martin Hyland whose questions greatly helped to make the proof of the main result more comprehensible.

\section{Recollections: monads, algebras, modules}
 
In this section, we recall some basic definitions and results from category theory used in this paper, referring the reader to \cite{Lin69-1,Lin69-2,Mac71} for further details.
 
\subsection{Monads}
Let $\sfC$ be a category.  Recall that all endofunctors of $\sfC$ form a strict monoidal category $(\END(\sfC), \circ, \mathbbold{1})$. More precisely, in that category morphisms are natural transformations,
the monoidal structure $\circ$ is the composition of endofunctors, $(\scrF\circ\scrG)(c)=\scrF(\scrG(c))$, and the unit of the monoidal structure~$\mathbbold{1}$ is the identity functor, $\mathbbold{1}(c)=c$. A \emph{monad} on $\sfC$ is a monoid $(\scrM,\mu_\scrM,\eta_\scrM)$ in $\END(\sfC)$; here we denote by $\mu_\scrM\colon \scrM\circ\scrM\rightarrow\scrM$ the monoid product, and by $\eta_\scrM\colon\mathbbold{1}\rightarrow \scrM$ the monoid unit.

\subsection{Algebras}
An \emph{algebra for the monad $\scrM$} is an object $c$ of $\sfC$, and a structure map
 \[
\gamma_c\colon\scrM(c)\to c
 \]
for which the
two diagrams
 \[
 \xymatrix@M=6pt{
\scrM(\scrM(c))\ar@{->}^{\scrM(\gamma_c)}[rr] \ar@{->}_{\mu_{\scrM}(c)}[d] & &  \scrM(c) \ar@{->}^{\gamma_c}[d]  \\ 
\scrM(c)\ar@{->}^{\gamma_c}[rr]  & & c      
 }
\qquad 
 \xymatrix@M=6pt{
\mathbbold{1}(c)\ar@{->}^{\eta_\scrM(c)}[rr] \ar@{->}_{1_c}[drr] & &  \scrM(c) \ar@{->}^{\gamma_c}[d]  \\ 
 & & c      
 }
 \]
commute for all~$c$. The category of algebras over a monad $\scrM$ is denoted by $\sfC^\scrM$. 

\subsection{Modules}
The notion of a module over a monad follows the general definition of a module over a monoid in monoidal category. We shall primarily focus on right modules; left modules are defined similarly. A \emph{right module over a monad $\scrM$} is an endofunctor $\scrR$ together with a natural transformation 
 \[
\rho_\scrR\colon\scrR\circ\scrM\rightarrow\scrR
 \]
for which the two diagrams
 \[
 \xymatrix@M=6pt{
\scrR(\scrM(\scrM(c)))\ar@{->}^{\scrR(\mu_\scrM(c))}[rr] \ar@{->}_{\rho_{\scrR}(\scrM(c))}[d] & &  \scrR(\scrM(c)) \ar@{->}^{\rho_\scrR(c)}[d]  \\ 
\scrR(\scrM(c))\ar@{->}_{\rho_\scrR(c)}[rr]  & & \scrR(c)      
 }
\qquad 
 \xymatrix@M=6pt{
\scrR(\mathbbold{1}(c))\ar@{->}^{\scrR(\eta_\scrM(c))}[rr] \ar@{->}^{1_{\scrR(c)}}[drr]  & &  \scrR(\scrM(c)) \ar@{->}^{\rho_\scrR(c)}[d]  \\ 
  & & \scrR(c)      
 }
 \]
commute for all~$c$. The category of right modules over a monad $\scrM$ is denoted by~$\Mod_\scrM$. The forgetful functor from the category $\Mod_\scrM$ to $\END(\sfC)$ has a left adjoint, called the \emph{free right $\scrM$-module} functor; the free right $\scrM$-module generated by an endofunctor $\scrX$ is $\scrX\circ\scrM$ with the structure map $\scrX\circ\scrM\circ\scrM \to \scrX\circ\scrM$ given by~$1_\scrX\circ\mu_\scrM$. 

\subsection{Coequalizers in categories of algebras}
Recall that a \emph{reflexive pair} in a category $\sfC$ is a diagram
 \[
 \xymatrix@M=6pt{
c_1 \ar@/^1pc/^{f}[rr] \ar@/_1pc/_{g}[rr] && \ar@{->}^{d}[ll] c_2 ,
 }
 \]
where $fd=gd=1_{c_2}$. Throughout this paper, we shall assume  the following property of the category~$\sfC$:  for every monad $\scrM$, the category $\sfC^\scrM$ has coequalizers of all reflexive pairs. There are various criteria for that to happen, see, for instance, \cite{AdKo80} and \cite[Sec.~9.3]{BaWe85} (both relying on the seminal work of Linton on coequalizers in categories of algebras~\cite{Lin69}). In particular, this property holds for any complete and cocomplete well-powered regular category where all regular epimorphisms split. This holds, for instance, for the category $\Set$ and the categories $\Vect_\bbk$ (the category of vector spaces over~$\bbk$, for any field~$\bbk$) and~$\Vect_\bbk^\Sigma$  (the category of symmetric sequences over~$\bbk$, for a field~$\bbk$ of zero characteristic), as well as their ``super'' ($\mathbb{Z}$- or $\mathbb{Z}/2$-graded) versions, which are the main categories where we expect our results to be applied. 

\section{Categorical PBW theorem}

\subsection{The adjunction between change of structure and direct image}
Suppose that $\scrM$ and $\scrN$ are two monads on $\sfC$, and that $\phi\colon\scrM\rightarrow\scrN$ is a natural transformation of monads. For such data, one can define the \emph{functor of change of algebra structure} 
 \[
\phi^*\colon \sfC^\scrN\to\sfC^\scrM 
 \]
for which the algebra map $\scrM(c)\to c$ on an $\scrN$-algebra $c$ is computed as the composite 
 \[
\scrM(c)\xrightarrow{\phi(1_c)}\scrN(c)\xrightarrow{\gamma_c} c .
 \]
By \cite[Prop.~1]{Lin69}, under our assumptions on $\sfC$ the functor $\phi^*$ has a left adjoint functor, the \emph{direct image functor} $\phi_!$, and for every $\scrM$-algebra $c$,  the $\scrN$-algebra $\phi_!(c)$ can be computed as the coequalizer of the reflexive pair of morphisms
 \[
 \xymatrix@M=6pt{
\scrN(\scrM(c)) \ar^{1_\scrN(\phi(1_c))}[rr] \ar@/_1pc/_{ 1_\scrN(\gamma_c)}[rrrr] &&\scrN(\scrN(c)) \ar^{ \mu_\scrN(1_c)}[rr] &&\scrN(c) ,
 }
 \]
which is reflexive with the arrow $d\colon\scrN(c)\to\scrN(\scrM(c))$ given by
 \[
\scrN(c)\xrightarrow{\cong}\scrN(\mathbbold{1}(c))\xrightarrow{1_\scrN(\eta_\scrM(1_c))}\scrN(\scrM(c)) .
 \]

Let us give a toy example of this general construction which would be familiar to a reader without a systematic categorical background. Let $\sfC=\Vect_\bbk$ be a category of vector spaces over a field~$\bbk$, and let $A$ be an associative algebra over~$\bbk$. Consider the endofunctor $\scrM_A$ of $\Vect_\bbk$ given by $\scrM_A(V)=A\otimes V$. It is easy to see that the associative algebra structure on~$A$ leads to a monad structure on~$\scrM_A$, and algebras over the monad $\scrM_A$ are left $A$-modules. Moreover, if $\psi\colon A\to B$ is a morphism of associative algebras, we have a natural transformation of monads $\phi\colon\scrM_A\to\scrM_B$, and the functors $\phi^*$ and $\phi_!$ are the usual restriction and induction functors between the categories of left modules. 

\smallskip 

In general, the direct image functor is well understood and frequently used in the case of analytic endofunctors~\cite{Jo85}, i.~e. in the case of operads~\cite{LV}; in that case this formula for the adjoint functor fits into the general framework of relative composite products of operadic bimodules~\cite{GaJo,Re96}. Relative products of arbitrary endofunctors do not, in general, satisfy all the properties of relative composite products; however, in some situations all the necessary coequalizers exist (and are absolute); as a consequence, for our purposes there is no need to restrict oneself to analytic endofunctors.

\subsection{The main result}
As we remarked above, our goal is to give a categorical formalisation of an intuitive view of the PBW property according to which ``the underlying object of the universal enveloping algebra of $c$ does not depend on the algebra structure of~$c$''. Suppose that $\phi\colon\scrM\rightarrow\scrN$ is a natural transformation of monads on $\sfC$. We shall say that the datum $(\scrM,\scrN,\phi)$ \emph{has the PBW property} if there exists an endofunctor~$\scrX$ such that the underlying object of the universal enveloping $\scrN$-algebra $\phi_!(c)$ of any $\scrM$-algebra~$c$ is isomorphic to~$\scrX(c)$ naturally with respect to morphisms in $\sfC^\scrM$. Using this definition, one arrives at a very simple and elegant formulation of the PBW theorem. Note that using the natural transformation $\phi$, we can regard $\scrN$ as a right $\scrM$-module via the maps
$\scrN\circ\scrM\xrightarrow[1_\scrN\circ\phi]{}\scrN\circ\scrN\xrightarrow[\mu_\scrN]{}\scrN$.

\begin{theorem}\label{th:PBWNat}
Let  $\phi\colon\scrM\rightarrow\scrN$ be a natural transformation of monads. The datum $(\scrM,\scrN,\phi)$ has the PBW property if and only if the right $\scrM$-module action on $\scrN$ via $\phi$ is free.
\end{theorem}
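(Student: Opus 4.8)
The plan is to prove both implications by reducing everything to the behaviour of $\phi_!$ on free $\scrM$-algebras, where it is computable, and then to transport the information to arbitrary algebras using that every algebra is a canonical reflexive coequalizer of free ones. Throughout, write $F_\scrM\dashv U_\scrM\colon\sfC^\scrM\to\sfC$ and $F_\scrN\dashv U_\scrN\colon\sfC^\scrN\to\sfC$ for the free/forgetful adjunctions, and set $E:=U_\scrN\circ\phi_!\colon\sfC^\scrM\to\sfC$, the functor sending an $\scrM$-algebra to the underlying object of its universal enveloping $\scrN$-algebra. In this language the PBW property is precisely the existence of an endofunctor $\scrX$ together with a natural isomorphism $E\cong\scrX\circ U_\scrM$.

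First I would record the computation on free algebras. Since $\phi^*$ does not alter underlying objects we have $U_\scrM\circ\phi^*=U_\scrN$, so $\phi_!\circ F_\scrM$ and $F_\scrN$ are both left adjoint to $U_\scrN$ and hence naturally isomorphic; applying $U_\scrN$ gives a natural isomorphism $E\circ F_\scrM\cong\scrN$ of endofunctors of $\sfC$. This isomorphism is the bridge between the two sides: evaluating a hypothetical PBW isomorphism $E\cong\scrX\circ U_\scrM$ on free algebras immediately yields $\scrN\cong\scrX\circ\scrM$ as \emph{endofunctors}. The real content of the ``only if'' direction is to upgrade this to an isomorphism of right $\scrM$-modules, and this is where I expect the main difficulty to lie.

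To handle the module structure I would pass to the Kleisli category $\mathrm{Kl}(\scrM)$, using the standard dictionary between right $\scrM$-modules and functors $\mathrm{Kl}(\scrM)\to\sfC$ that restrict to the given endofunctor along the canonical $\sfC\to\mathrm{Kl}(\scrM)$: a right action $\rho\colon\scrR\circ\scrM\to\scrR$ corresponds to the functor $\widetilde\scrR$ sending a Kleisli arrow $f\colon V\to\scrM(W)$ to $\rho_W\circ\scrR(f)$, the module axioms being exactly functoriality, and morphisms of modules corresponding to natural transformations of the associated functors. Under this dictionary the free module $\scrX\circ\scrM$ corresponds to $\scrX\circ K_\scrM$, where $K_\scrM\colon\mathrm{Kl}(\scrM)\to\sfC$ is $V\mapsto\scrM(V)$, while the module $\scrN$ (via $\phi$) corresponds to $E\circ L_\scrM$, where $L_\scrM\colon\mathrm{Kl}(\scrM)\to\sfC^\scrM$ is the comparison onto free algebras; the latter identification follows because $\phi_!$ carries free algebras to free algebras compatibly with the Kleisli functor induced by $\phi$. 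Crucially, the isomorphism $E\circ F_\scrM\cong\scrN$ above is merely the restriction of $E\circ L_\scrM\cong\widetilde\scrN$ along $\sfC\to\mathrm{Kl}(\scrM)$, so naturality with respect to \emph{all} Kleisli morphisms---equivalently, naturality of the PBW isomorphism with respect to all $\scrM$-algebra maps between free algebras---is precisely what records the right action. Granting the PBW property, restricting $E\cong\scrX\circ U_\scrM$ along $L_\scrM$ then gives $\widetilde\scrN\cong\scrX\circ K_\scrM\cong\widetilde{\scrX\circ\scrM}$ as functors on $\mathrm{Kl}(\scrM)$, whence $\scrN\cong\scrX\circ\scrM$ as right $\scrM$-modules.

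For the converse I would assume $\scrN\cong\scrX\circ\scrM$ as right $\scrM$-modules and compute $\phi_!(c)$ from the coequalizer presentation of the excerpt. Its defining reflexive pair $\scrN(\scrM(c))\rightrightarrows\scrN(c)$ has underlying maps $\rho_c$ and $\scrN(\gamma_c)$, which under the freeness isomorphism become $\scrX$ applied to the pair $\mu_\scrM(c),\,\scrM(\gamma_c)\colon\scrM(\scrM(c))\rightrightarrows\scrM(c)$. This latter pair is part of the canonical presentation of the $\scrM$-algebra $c$, a split coequalizer in $\sfC$ with apex $c$, coequalizer $\gamma_c$, and splittings built from $\eta_\scrM$; being split it is absolute, so $\scrX$ sends it to a split coequalizer with apex $\scrX(c)$. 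Since $U_\scrN$ is monadic it creates coequalizers of $U_\scrN$-split pairs, so this split coequalizer in $\sfC$ lifts to the coequalizer computing $\phi_!(c)$ and is preserved by $U_\scrN$; hence $E(c)=U_\scrN\phi_!(c)\cong\scrX(c)$, naturally in $c\in\sfC^\scrM$, which is the PBW property. As flagged, the main obstacle is the module-structure bookkeeping in the first direction: recovering $\scrN\cong\scrX\circ\scrM$ as bare endofunctors is immediate, and the work lies in verifying that the extra Kleisli naturality encodes exactly the right $\scrM$-action, so that genuine freeness---and not merely abstract isomorphism with the underlying functor of a free module---is what the PBW property detects.
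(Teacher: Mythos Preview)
Your proof is correct and follows essentially the same architecture as the paper's: both directions hinge on the fact that $\phi_!$ sends free $\scrM$-algebras to free $\scrN$-algebras, and the ``if'' direction is handled identically via the split (absolute) coequalizer obtained by applying $\scrX$ to the canonical presentation of an $\scrM$-algebra. The only substantive difference is in the ``only if'' direction: the paper checks by hand that the isomorphism $\scrN\cong\scrX\circ\scrM$ respects the right action (tracking the coequalizer diagram as an evaluation of a diagram of right $\scrM$-modules), whereas you package this check via the standard equivalence between right $\scrM$-modules and functors out of $\mathrm{Kl}(\scrM)$, so that naturality of the PBW isomorphism with respect to algebra maps between free algebras is \emph{literally} the statement that the induced isomorphism is one of right modules. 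Your Kleisli formulation makes the module-structure bookkeeping more transparent and explains conceptually why naturality in $\sfC^\scrM$ (not just in $\sfC$) is exactly what is needed; the paper's version is terser but requires the reader to supply the verification that $\phi_!(\mu_\scrM)$ corresponds to the module action $\rho$ under the identification $\phi_!F_\scrM\cong F_\scrN$.
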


\begin{proof}

Let us first suppose that the datum $(\scrM,\scrN,\phi)$ has the PBW property, and let $\scrX$ be the corresponding endofunctor. Let us take an object $d$ of $\sfC$ and consider the free $\scrM$-algebra $c=\scrM(d)$; we shall now show that the direct image $\phi_!(c)$ is the free $\scrN$-algebra $\scrN(d)$. To that end, we note that there is an obvious commutative diagram 
 \[
 \xymatrix@M=4pt{
\sfC^\scrN \ar[dr] \ar^{\phi^*}[rr]&& \ar[dl]\sfC^\scrM\\
&\sfC&
 }
 \]
where the arrows to $\sfC$ are obvious forgetful functors from the categories of algebras. All the three functors in this commutative diagram are right adjoint functors, so the corresponding diagram of the left adjoint functors also commutes, meaning that free $\scrM$-algebras are sent under $\phi_!$ to free $\scrN$-algebras: we have $\phi_!(\scrM(d))\cong\scrN(d)$ naturally in~$d$. Combining this result with the PBW property, we see that we have a natural isomorphism 
 \[
\scrN(d)\cong\phi_!(\scrM(d))\cong\scrX(\scrM(d))=(\scrX\circ\scrM)(d) ,
 \]
which shows that $\scrN\cong \scrX\circ\scrM$ on the level of endofunctors. Finally, we note that the pair of arrows
 \[
 \xymatrix@M=6pt{
\scrN(\scrM(\scrM(d))) \ar^{1_\scrN(\phi(1_{\scrM(d)}))}[rr] \ar@/_1pc/_{ 1_\scrN(\gamma_{\scrM(d)})}[rrrr] &&\scrN(\scrN(\scrM(d))) \ar^{ \mu_\scrN(1_{\scrM(d)})}[rr] &&\scrN(\scrM(d)) .
 }
 \]
that defines $\phi_!(\scrM(d))$ as a coequaliser arises from evaluating the diagram 
 \[
 \xymatrix@M=6pt{
\scrN\circ \scrM\circ \scrM \ar^{1_\scrN\circ \phi \circ 1_{\scrM}}[rr] \ar@/_1pc/_{ 1_\scrN\circ \gamma_{\scrM}}[rrrr] &&\scrN\circ \scrN\circ \scrM \ar^{ \mu_\scrN\circ 1_{\scrM}}[rr] &&\scrN\circ \scrM .
 }
 \]
of right $\scrM$-modules and their maps on the object $c$. This shows that the isomorphism of endofunctors we obtained agrees with the right module action, and hence $\scrN$ is a free right $\scrM$-module. 

\smallskip 

The other way round, suppose that $\scrN$ is a free right $\scrM$-module, so that $\scrN\cong\scrX\circ\scrM$ for some endofunctor~$\scrX$. To prove that the datum $(\scrM,\scrN,\phi)$ has the PBW property, we shall utilize a very well known useful observation: in any \emph{split fork} diagram
 \[
 \xymatrix@M=6pt{
c_1   \ar@/^1pc/^{f}[rr] \ar@/_1pc/_{g}[rr]  && \ar_{t}[ll] c_2 \ar@/^0.5pc/^{e}[rr] && \ar@/^0.5pc/^{s}[ll] d 
 }
 \] 
where $es=1_d$, $ft=1_{c_2}$, and $gt=se$, $d$ is the coequalizer of the pair $f,g$. 

The $\scrN$-algebra $\phi_!(c)$ is the coequalizer of the reflexive pair
 \[
 \xymatrix@M=6pt{
\scrN(\scrM(c)) \ar^{1_\scrN(\phi(1_c))}[rr] \ar@/_1pc/_{ 1_\scrN(\gamma_c)}[rrrr] &&\scrN(\scrN(c)) \ar^{ \mu_\scrN(1_c)}[rr] &&\scrN(c) ,
 }
 \]
Note that the composition of the arrows $\scrN(\scrM(c))\xrightarrow{1_\scrN(\phi(1_c))} \scrN(\scrN(c))\xrightarrow{\mu_\scrN(1_c)}\scrN(c)$ is the definition of the right module action of $\scrM$ on $\scrN$, so under the isomorphism of right modules $\scrN\cong\scrX\circ\scrM$, the above  pair of arrows becomes
 \[
 \xymatrix@M=6pt{
\scrX(\scrM(\scrM(c))) \ar@/^0.5pc/^{\quad 1_\scrX(\mu_\scrM(1_c))}[rr] \ar@/_0.5pc/_{\quad  1_{\scrX\circ\scrM}(\gamma_c)}[rr]  &&\scrX(\scrM(c)) ,
 }
 \]
Let us prove that $\phi_!(c)\cong\scrX(c)$ by demonstrating that this pair of arrows can be completed to a split fork with $\scrX(c)$ as the handle of the fork. To that end, we define the arrow $e\colon \scrX(\scrM(c))\to\scrX(c)$ to be $1_\scrX(\gamma_c)$, the arrow $s\colon \scrX(c)\to\scrX(\scrM(c))$ to be the composite 
 \[
\scrX(c) \xrightarrow{\cong} \scrX(\mathbbold{1}(c)) \xrightarrow{1_\scrX(\eta_\scrM(1_c)) }\scrX(\scrM(c)) ,
 \]
and the arrow $t\colon\scrX(\scrM(c))\to\scrX(\scrM(\scrM(c)))$ to be the composite 
 \[
\scrX(\scrM(c)) \xrightarrow{\cong} \scrX(\mathbbold{1}(\scrM(c)))\xrightarrow{1_\scrX(\eta_\scrM(1_{\scrM(c)})) } \scrX(\scrM(\scrM(c))) ,
 \]
so the property $es=1_{\scrX(\scrM(c))}$ follows from the unit axiom for the algebra $c$, the property $ft=1_{\scrX(\scrM(c))}$ follows from the unit axiom for the monad $\scrM$, and also $se=gt$ by a direct inspection. This verification was natural in $c$ with respect to morphisms in~$\sfC^\scrM$,  so we have $\phi_!(c)\cong\scrX(c)$ naturally in $c$, and the datum $(\scrM,\scrN,\phi)$ has the PBW property.
\end{proof}

Continuing with the toy example of endofunctors $\scrM_A$ of $\Vect_\bbk$, freeness of $\scrM_B$ as as a right $\scrM_A$-module corresponds (at least for augmented algebras) to freeness of $B$ as a right $A$-module. If we have a right $A$-module isomorphism $B\cong X\otimes A$, the underlying space of the induced module $B\otimes_A M$ is isomorphic to $X\otimes M$, and does not depend on the module structure on~$M$. For instance, this is frequently used in representation theory to obtain an explicit description for the underlying spaces of induced representations of groups and of Lie algebras; in the latter case freeness follows from the classical PBW theorem. Our result offers another PBW-flavoured viewpoint for such an explicit description. 

\section{Case of analytic endofunctors}

Most interesting instances where our results have so far found applications deal with the case where the endofunctors $\scrM$ and~$\scrN$ are analytic~\cite{Jo85}, so that the monads are in fact operads~\cite{LV}. In this section, we shall mainly discuss the case $\sfC=\Vect_\bbk$, where $\bbk$ is a field of characteristic zero. In general, for analytic endofunctors to make sense and satisfy various familiar properties, it is enough to require that the category~$\sfC$ is symmetric monoidal cocomplete (including the hypothesis that the monoidal structure distributes over colimits). To state and prove a homological criterion for freeness like the one in Section~\ref{sec:Homol}, one has to make some extra assumptions, e.g. assume that the category of symmetric sequences~$\sfC^\Sigma$ is a concrete Abelian category where epimorphisms split. 

\subsection{Homological criterion of freeness}\label{sec:Homol}

We begin with setting up our main technical tool, a homological criterion of freeness of right modules.  It is well known that operadic right modules are generally easier to work with than left modules, since the composite product of analytic endofunctors is linear in the first argument. In particular, one has the wealth of homological algebra constructions that are applicable to the Abelian category of right modules, see~\cite{Fr09} for details. Moreover, for connected weight graded operads over a field of characteristic zero, one can define the notion of a minimal free resolution of a weight graded module and prove its existence and uniqueness up to isomorphism, like it is done for modules over rings in the seminal paper of Eilenberg~\cite{Ei56}. This leads to a homological criterion for freeness of a right $\scrM$-module~$\scrR$.

Recall that for an operad $\scrM$, its left module $\scrL$, and its right module $\scrR$, there is a two-sided bar construction $\mathsf{B}_\bullet(\scrR,\scrM,\scrL)$. In somewhat concrete terms, it is spanned by rooted trees where for each tree the root vertex is decorated by an element of $\scrM$, the internal vertices whose all children are leaves are decorated by elements of $\scrN$, and other internal vertices are decorated by elements of $\scrP$; the differential contracts edges of the tree and uses the operadic composition and the module action maps. For an operad with unit, this bar construction is acyclic; moreover, for a connected weight graded operad $\scrM$ the two-sided bar construction $\mathsf{B}_\bullet(\scrM,\overline{\scrM},\scrM)$ is acylic. This leads to a free resolution of any right-module $\scrR$ as 
$$\scrR\circ_\scrM\mathsf{B}_\bullet(\scrM,\overline{\scrM},\scrM)\cong\mathsf{B}_\bullet(\scrR,\overline{\scrM},\scrM).$$
This resolution can be used to prove the following result.

\begin{proposition}\label{prop:Homol}
Let $\scrM$ be a connected weight graded operad acting on $\Vect_\bbk$, and let $\scrR$ be a weight graded right $\scrM$-module. The right module $\scrR$ is free if and only if the positive degree homology of the bar construction $\mathsf{B}_\bullet(\scrR,\overline{\scrM},\mathbbold{1})$ vanishes; in the latter case, $\scrR$ is generated by $H_0(\mathsf{B}_\bullet(\scrR,\scrM,\mathbbold{1}))$. 
\end{proposition}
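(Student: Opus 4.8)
The plan is to reinterpret the homology of the bar construction as a derived functor and then detect freeness by a graded Nakayama argument. As recalled just above, the two-sided bar construction $\mathsf{B}_\bullet(\scrR,\overline{\scrM},\scrM)$ is a resolution of $\scrR$ by \emph{free} right $\scrM$-modules, since $\mathsf{B}_n(\scrR,\overline{\scrM},\scrM)=\scrR\circ\overline{\scrM}^{\circ n}\circ\scrM$ is the free module generated by $\scrR\circ\overline{\scrM}^{\circ n}$. Applying the relative composite product $-\circ_\scrM\mathbbold{1}$ with the trivial left module $\mathbbold{1}$ (viewed as an $\scrM$-module through the augmentation $\scrM\to\mathbbold{1}$) turns this resolution into the complex $\mathsf{B}_\bullet(\scrR,\overline{\scrM},\mathbbold{1})$, and because a resolution by free modules computes the derived functor, one gets a natural isomorphism $H_\bullet(\mathsf{B}_\bullet(\scrR,\overline{\scrM},\mathbbold{1}))\cong\operatorname{Tor}^{\scrM}_\bullet(\scrR,\mathbbold{1})$. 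In particular this homology is independent of the chosen model and depends only on $\scrR$. The statement thus reduces to the claim that a weight graded right $\scrM$-module $\scrR$ is free if and only if $\operatorname{Tor}^{\scrM}_{n}(\scrR,\mathbbold{1})=0$ for all $n>0$, together with the identification of the generators with $\operatorname{Tor}^{\scrM}_0(\scrR,\mathbbold{1})=\scrR\circ_\scrM\mathbbold{1}=H_0(\mathsf{B}_\bullet(\scrR,\scrM,\mathbbold{1}))$.

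The easy direction is immediate: a free module $\scrR\cong\scrX\circ\scrM$ is its own free resolution, so $\operatorname{Tor}^{\scrM}_{n}(\scrR,\mathbbold{1})=0$ for $n>0$, while $\operatorname{Tor}^{\scrM}_0(\scrR,\mathbbold{1})=(\scrX\circ\scrM)\circ_\scrM\mathbbold{1}\cong\scrX$ recovers the space of generators. For the converse I would first form the indecomposables $\scrX:=\scrR\circ_\scrM\mathbbold{1}$, the cokernel of the action $\scrR\circ\overline{\scrM}\to\scrR$, which is exactly $H_0$. Using that $\bbk$ has characteristic zero and that epimorphisms of symmetric sequences split, I would choose a section of the natural surjection $\scrR\twoheadrightarrow\scrX$ and extend it to a morphism of right modules $\psi\colon\scrX\circ\scrM\to\scrR$. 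By construction $\psi$ is an isomorphism after applying $-\circ_\scrM\mathbbold{1}$, so its cokernel has vanishing indecomposables; connectivity of $\scrM$ together with the weight grading then forces the cokernel to vanish, so $\psi$ is surjective.

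It remains to see that $\psi$ is injective. Writing $\scrK=\ker\psi$ and feeding the short exact sequence $0\to\scrK\to\scrX\circ\scrM\to\scrR\to 0$ into the long exact sequence for $\operatorname{Tor}^{\scrM}_\bullet(-,\mathbbold{1})$, freeness of $\scrX\circ\scrM$ and the fact that $\psi$ induces the identity on indecomposables collapse the sequence to an isomorphism $\scrK\circ_\scrM\mathbbold{1}\cong\operatorname{Tor}^{\scrM}_1(\scrR,\mathbbold{1})$. By hypothesis the right-hand side vanishes, so the indecomposables of $\scrK$ vanish, and the graded Nakayama lemma forces $\scrK=0$; hence $\psi$ is an isomorphism and $\scrR$ is free on $\scrX=H_0$. (Note that only the vanishing of $\operatorname{Tor}^{\scrM}_1$ is used here.) Equivalently, one may run the argument through a minimal free resolution of $\scrR$ in the sense of Eilenberg: minimality says its differentials vanish after $-\circ_\scrM\mathbbold{1}$, so $\operatorname{Tor}^{\scrM}_n(\scrR,\mathbbold{1})$ is precisely the space of generators in homological degree $n$, and the vanishing hypothesis concentrates the resolution in degree $0$.

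I expect the main obstacle to be the passage from the vanishing of indecomposables to the vanishing of the module itself, that is, the graded Nakayama step. This is where the connectivity of $\scrM$ and the boundedness below of the weight grading are genuinely used: one argues weight by weight, observing that the lowest nonzero weight component of $\scrK$ cannot lie in the image of $\scrK\circ\overline{\scrM}\to\scrK$ because $\overline{\scrM}$ strictly raises weight, and then induces upward. The characteristic-zero hypothesis enters only to guarantee that the relevant short exact sequences of symmetric sequences split, so that sections exist and the long exact sequence for $\operatorname{Tor}$ behaves exactly as in the classical module-theoretic setting.
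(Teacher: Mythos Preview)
Your argument is correct and is essentially the same as the paper's: the paper simply invokes ``existence and uniqueness of the minimal free right $\scrM$-module resolution of $\scrR$'' in one line, and your Tor/Nakayama computation is precisely what that invocation unpacks to (as you yourself note in your final paragraph). Your write-up is in fact more informative than the paper's, since it makes explicit where connectivity, the weight grading, and the characteristic-zero splitting hypothesis are actually used.
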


\begin{proof}
This immediately follows from the existence and uniqueness of the minimal free right $\scrM$-module resolution of~$\scrR$.
\end{proof}

This result is usually applied in one of the following ways. First, one can define a filtration on~$\scrR$ that is compatible with the right $\scrM$-action, and prove freeness of the associated graded module, which then by a spectral sequence argument proves freeness of~$\scrR$.
Second, one may apply the forgetful functor from symmetric operads to shuffle operads, and prove freeness in the shuffle category; since the forgetful functor is monoidal and does not change the underlying vector spaces, this guarantees vanishing of homology in the symmetric category; this approach was introduced by the first author in~\cite{Dot09}. 

\subsection{Aspects of the classical PBW theorem}

Let us first discuss how the classical Poincar\'e--Birkhoff--Witt theorem for Lie algebras fits in our framework. For that, we consider the morphism of operads $\phi\colon \Lie\to\Ass$ which is defined on generators by the formula 
 $
[a_1,a_2]\mapsto a_1\cdot a_2-a_2\cdot a_1 .  
 $ 
\subsubsection*{Case of a field of zero characteristic. } As a first step, let us outline a proof of (a version of) the classical PBW theorem (Poincar\'e~\cite{Po00}, Birkhoff~\cite{Bi37}, Witt~\cite{Wi37}) over a field $\bbk$ of characteristic zero.

\begin{theorem}\label{th:PBW}
Let $L$ be a Lie algebra over a field $\bbk$ of characteristic zero. There is a vector space isomorphism
 \[
U(L)\cong S(L)
 \] 
which is natural with respect to Lie algebra morphisms. Here $S(L)$, as usual, denotes the space of symmetric tensors in $L$.
\end{theorem}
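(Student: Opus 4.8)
The plan is to deduce the statement from the general machinery developed in the previous sections, applied to the morphism of operads $\phi\colon\Lie\to\Ass$. First I would observe that $\Lie$ and $\Ass$ are connected weight graded operads acting on $\Vect_\bbk$, hence in particular monads, and that $\phi$ is a natural transformation of monads. By Theorem~\ref{th:PBWNat}, the datum $(\Lie,\Ass,\phi)$ has the PBW property---that is, there is an endofunctor $\scrX$ with $U(L)=\phi_!(L)\cong\scrX(L)$ naturally in $L$---if and only if $\Ass$ is free as a right $\Lie$-module via $\phi$, in which case $\scrX$ is the corresponding generating endofunctor, and the naturality with respect to Lie algebra morphisms is automatic. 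Thus the whole statement reduces to two tasks: proving that $\Ass$ is free as a right $\Lie$-module, and identifying its generators so that $\scrX(L)=S(L)$.

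For the freeness I would appeal to the homological criterion of Proposition~\ref{prop:Homol}: it suffices to show that the positive-degree homology of the bar construction $\mathsf{B}_\bullet(\Ass,\overline{\Lie},\mathbbold{1})$ vanishes, and in that case the generators are computed by $H_0(\mathsf{B}_\bullet(\Ass,\Lie,\mathbbold{1}))=\Ass\circ_\Lie\mathbbold{1}$. The degree-zero piece is the quotient of $\Ass$ by the image of the right action of $\overline{\Lie}$; concretely, in arity $n$ it is the space of multilinear monomials $a_1\cdots a_n$ modulo all relations produced by commutators, which is one-dimensional and symmetric in its inputs. Hence $H_0\cong\Com$ as a symmetric sequence, and since $\Com(L)=S(L)$, identifying $\scrX$ with $\Com$ is precisely what yields the desired isomorphism $U(L)\cong S(L)$.

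The genuine content, and the step I expect to be the main obstacle, is the vanishing of the higher homology, where the characteristic-zero hypothesis is essential. My concrete approach would be to exhibit the symmetrization map $\Com\to\Ass$, sending $a_1\cdots a_k\mapsto\frac1{k!}\sum_{\sigma\in S_k}a_{\sigma(1)}\cdots a_{\sigma(k)}$---which visibly requires dividing by $k!$---and to extend it to a morphism of right $\Lie$-modules $\Com\circ\Lie\to\Ass$. A generating-function computation shows that the arity-$n$ dimensions of $\Com\circ\Lie$ and of $\Ass$ coincide (both symmetric sequences have exponential generating series $\tfrac{x}{1-x}$), so it is enough to prove that this candidate comparison map is surjective, equivalently injective. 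Surjectivity is the operadic incarnation of the classical monomial argument: symmetric products together with the right $\Lie$-action span all of $\Ass$. Alternatively, one may establish the vanishing of $H_{>0}$ directly from the Koszulness of $\Lie$ (whose Koszul dual is $\Com$) or, following the second strategy recorded after Proposition~\ref{prop:Homol}, by passing to the associated shuffle operads, where freeness can be verified combinatorially on a monomial basis. Either route establishes freeness with generators $\Com$, completing the proof; the appearance of the factorials makes transparent why this argument, and the naturality of the resulting isomorphism, is confined to characteristic zero.
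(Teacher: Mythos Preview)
Your proposal is correct in outline and reaches the same conclusion, but the paper's proof takes a different and somewhat cleaner route to the freeness of $\Ass$ as a right $\Lie$-module. Rather than constructing the symmetrization map $\Com\circ\Lie\to\Ass$ directly and arguing surjectivity, the paper filters $\Ass$ by powers of the two-sided ideal generated by the commutator $a_1\cdot a_2-a_2\cdot a_1$; the associated graded operad is then identified with the Poisson operad $\Poisson$, for which the decomposition $\Poisson\cong\Com\circ\Lie$ (hence freeness over $\Lie$ with generators $\Com$) is essentially built into its distributive-law presentation, and a dimension count $\dim\Poisson(n)=n!=\dim\Ass(n)$ confirms there are no further relations in $\gr\Ass$. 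Freeness of $\gr\Ass$ then transfers to $\Ass$ by the spectral sequence argument recorded after Proposition~\ref{prop:Homol}. Your approach via explicit symmetrization is more hands-on and makes the characteristic-zero dependence transparent, but the ``classical monomial argument'' you invoke for surjectivity is exactly where the combinatorial work is hidden, and your aside about deducing the homology vanishing directly from Koszulness of $\Lie$ is not quite right as stated: Koszulness controls $\mathsf{B}_\bullet(\mathbbold{1},\overline{\Lie},\mathbbold{1})$, not $\mathsf{B}_\bullet(\Ass,\overline{\Lie},\mathbbold{1})$, so one still needs some independent input about $\Ass$. The filtration trick sidesteps both issues by reducing to a case where freeness is manifest.
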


\begin{proof}
According to Theorem \ref{th:PBWNat}, it is sufficient to establish freeness of the associative operad as a right $\Lie$-module. For that, one argues as follows. There is a filtration on the operad $\Ass$ by powers of the two-sided ideal generated by the Lie bracket $a_1\cdot a_2-a_2\cdot a_1$. The associated graded operad $\gr\Ass$ is easily seen to be generated by two operations that together satisfy the defining relations of the operad~$\Poisson$ encoding Poisson algebras and, possibly, some other relations. It is well known that for the operad $\Poisson$, we have $\Poisson\cong\Com\circ\Lie$ on the level of endofunctors, so it is a free right $\Lie$-module with generators $\Com$. 
By a straightforward computation with generating functions of dimensions, this implies that $\dim\Poisson(n)=n!=\dim\Ass(n)$, and consequently  there can be no other relations. 
By a spectral sequence argument, it is enough to prove the homology vanishing required by Proposition~\ref{prop:Homol} for the associated graded operad, so the $\Lie$-freeness of~$\Poisson$ implies the $\Lie$-freeness of~$\Ass$, with the same generators~$\Com$. Noting that $\Com(L)=S(L)$ completes the proof.  
\end{proof}

\subsubsection*{Non-functoriality of PBW in positive characteristic. }
A useful feature of the example of the morphism $\Lie\to\Ass$ is that it highlights a slight difference between our approach and the one of \cite{MiSh}. It turns out that by talking about PBW pairs, one does not detect an important distinction between the case of a field of characteristic zero and a field of positive characteristic; more precisely, the following result holds. (As the proof of Theorem~\ref{th:PBW} shows, in the characteristic zero case, such issues do not arise, and the two approaches are essentially equivalent.)

\begin{proposition}\label{prop:CharP}
Let the ground field $\bbk$ be of characteristic $p>0$. Then the pair of operads $(\Ass,\Lie)$ is a PBW-pair in the sense of \cite{MiSh}, so that $S(L)=U(\Ab L)\cong \gr U(L)$ for any Lie algebra $L$, but there is no way to choose vector space isomorphisms $S(L)\cong U(L)$ to be natural in~$L$. 
\end{proposition}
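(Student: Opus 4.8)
The plan is to prove the two assertions separately, as they have quite different natures. The first assertion, that $(\Ass,\Lie)$ is a PBW pair in the sense of \cite{MiSh} in arbitrary characteristic, is essentially classical and I would establish it exactly as in the characteristic-zero argument behind Theorem~\ref{th:PBW}, \emph{at the level of individual algebras rather than operads}. Concretely, for a fixed Lie algebra $L$ one equips $U(L)$ with its standard ascending filtration by products of at most $n$ elements of $L$, and one checks that $\gr U(L)$ is a commutative algebra receiving a surjection $S(L)=U(\Ab L)\twoheadrightarrow \gr U(L)$. The point is that this surjection is an isomorphism in any characteristic: this is the content of the classical PBW theorem as it is usually formulated for a Lie algebra that is free as a $\bbk$-module, and over a field every vector space is free, so no characteristic hypothesis is needed for \emph{this} statement. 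I would cite the standard reference (e.g.\ the filtration-based proof) and note that the associated graded isomorphism $S(L)\cong\gr U(L)$ holds verbatim over any field, which is precisely the Mikhalev--Shestakov PBW-pair condition.

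The second, and genuinely new, assertion is the \emph{non}-existence of natural vector space isomorphisms $S(L)\cong U(L)$ in characteristic $p>0$. Here the strategy is to invoke the equivalence furnished by Theorem~\ref{th:PBWNat}: a natural-in-$L$ family of isomorphisms $U(L)\cong\scrX(L)$ for some endofunctor $\scrX$ of $\Vect_\bbk$ exists if and only if the datum $(\Lie,\Ass,\phi)$ has the PBW property, which in turn holds if and only if $\Ass$ is free as a right $\Lie$-module. So it suffices to prove that $\Ass$ is \emph{not} a free right $\Lie$-module over a field of characteristic $p$. The natural way to detect this is via the homological criterion of Proposition~\ref{prop:Homol}, or equivalently by a dimension count: freeness of $\Ass$ over $\Lie$ would force $\Ass\cong\scrX\circ\Lie$ for some symmetric sequence $\scrX$, and one can test this against the generating-function (or $\mathbb{S}_n$-character) identity. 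In characteristic zero the proof of Theorem~\ref{th:PBW} shows the generators are exactly $\Com$, with $\dim\Poisson(n)=n!=\dim\Ass(n)$; in characteristic $p$ I expect the representation-theoretic obstruction to appear precisely because $\Com\circ\Lie\cong\Poisson$ and the freeness argument relied on semisimplicity of $\bbk[\mathbb{S}_n]$, which fails when $p\le n$.

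The cleanest route, which I would pursue, is to argue by contradiction using the arity-wise $\mathbb{S}_n$-module structure. If $\Ass$ were free as a right $\Lie$-module, then in each arity $n$ one would have an isomorphism of $\bbk[\mathbb{S}_n]$-modules expressing $\Ass(n)$ as a sum of modules induced from the generating sequence $\scrX$ along the module action; since the underlying isomorphism $S(L)\cong U(L)$ is required to be natural with respect to \emph{all} Lie algebra morphisms, it must in particular be $\mathbb{S}_n$-equivariant on the multilinear components. I would then exhibit a small arity $n$ (the first one where $p\le n$, so $n=p$ is the natural candidate) in which the $\bbk[\mathbb{S}_n]$-module $\Ass(n)=\bbk[\mathbb{S}_n]$ (the regular representation) cannot be written as the predicted free-module expression $(\scrX\circ\Lie)(n)$, because the relevant induction/restriction identity that holds over $\mathbb{Q}$ breaks modulo $p$. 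Comparing composition factors, or comparing dimensions of a suitable isotypic piece, yields the contradiction.

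The main obstacle I anticipate is the second assertion, specifically pinning down the precise modular representation-theoretic invariant that separates $\Ass(n)$ from a free right $\Lie$-module in characteristic $p$. The generating-function computation that forces ``no other relations'' in the proof of Theorem~\ref{th:PBW} uses that over $\mathbb{Q}$ equality of dimensions upgrades to an isomorphism via semisimplicity; in characteristic $p$ dimensions alone will \emph{not} decide freeness, so the subtlety is to produce an honest obstruction rather than a numerical coincidence. I would expect the cleanest obstruction to come from the failure of $\Lie(p)$ (or of the relevant piece of $\Poisson$) to be a projective $\bbk[\mathbb{S}_p]$-module, translated through Proposition~\ref{prop:Homol} into non-vanishing higher homology of the bar construction $\mathsf{B}_\bullet(\Ass,\overline{\Lie},\mathbbold{1})$; establishing this non-vanishing, or equivalently ruling out the free decomposition at the level of $\mathbb{S}_p$-modules, is where the real work lies.
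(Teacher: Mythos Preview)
Your plan for the first assertion is fine; the paper argues operadically that $\gr\Ass\cong\Poisson$ over any field, which amounts to your filtration argument on individual algebras.

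For the second assertion, your invocation of Theorem~\ref{th:PBWNat} is exactly right, but you then overcomplicate matters and leave the key step unresolved. You do not need to show that $\Ass$ fails to be free as a right $\Lie$-module in general, nor do you need Proposition~\ref{prop:Homol} (whose ambient hypotheses, incidentally, include characteristic zero). The statement concerns the \emph{specific} endofunctor $\scrX=\Com$, and by the forward direction of Theorem~\ref{th:PBWNat} a natural isomorphism $S(L)\cong U(L)$ would already force $\Ass\cong\Com\circ\Lie$ as analytic endofunctors, hence as $S_n$-modules in each arity. Now $(\Com\circ\Lie)(n)$ has $\Com(n)$, the trivial $S_n$-representation, as a direct summand, so the trivial module would have to split off $\Ass(n)\cong\bbk S_n$. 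For $n\ge p$ this is impossible: the trivial module is not projective over $\bbk S_n$ and hence cannot be a summand of a free module. That is the whole argument in the paper.

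So the obstruction you were searching for lives in $\Com$, not in $\Lie(p)$ or in higher bar homology: it is simply the failure of the trivial representation to be a direct summand of the regular representation in positive characteristic. Your instinct that the answer is representation-theoretic and appears at the first arity $n\ge p$ was correct; the specific invariant is just ``the trivial summand''.
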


\begin{proof}
The previous argument shows that $\gr\Ass\cong\Poisson$ over any field $\bbk$. This easily implies that the canonical surjection $\pi$ is an isomorphism, establishing the PBW pair property. However, if we had vector space isomorphisms $S(L)\cong U(L)$ which are functorial in~$L$, then by Theorem~\ref{th:PBWNat} we would have $\Ass\cong\Com\circ\Lie$ as analytic endofunctors, and as a consequence the trivial submodule of $\Ass(n)\cong\bbk S_n$ would split as a direct summand, which is false in positive characteristic. 
\end{proof}

To have a better intuition about the second part of the proof, one may note that the proof of equivalence of two definitions in \cite{MiSh} goes by saying that if we have a PBW pair of algebraic structures, then, first, the universal enveloping algebra of an Abelian algebra has a basis of monomials which does not depend on a particular algebra, and then derive the same for any algebra using the PBW property. The latter step requires making arbitrary choices of liftings that cannot be promoted to an endofunctor.

\subsection{Enlarging the category of algebra objects}\label{sec:Enlarge}

Let us record a very simple corollary of Theorem~\ref{th:PBWNat} for the case of operads.  

\begin{proposition}\label{prop:ChangeCat}
Let $\phi\colon\scrM\rightarrow\scrN$ be a morphism of augmented operads that are analytic endofunctors of a category~$\sfC$. Assume that the datum $(\scrM,\scrN,\phi)$ has the PBW property, and let $\sfD$ be a category of which the category~$\sfC$ is a full subcategory. Then the datum $(\scrM,\scrN,\phi)$ has the PBW property if $\scrM$ and $\scrN$ are regarded as analytic endofunctors of~$\sfD$.
\end{proposition}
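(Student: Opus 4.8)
The plan is to reduce everything, via Theorem~\ref{th:PBWNat}, to the freeness of right modules, and then to observe that for analytic endofunctors this freeness is a property of the underlying symmetric sequences that makes no reference to the ambient category. Applying Theorem~\ref{th:PBWNat} in both categories at once, the datum $(\scrM,\scrN,\phi)$ has the PBW property over $\sfC$ if and only if $\scrN$ is free as a right $\scrM$-module in $\END(\sfC)$, and the same equivalence holds over~$\sfD$. It therefore suffices to show that freeness of $\scrN$ as a right $\scrM$-module over $\sfC$ forces freeness over~$\sfD$.

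Next I would exploit analyticity. The operads $\scrM,\scrN$, the morphism $\phi$, and the induced right $\scrM$-action on $\scrN$ all arise from morphisms of symmetric sequences, and the analytic-functor (Schur) construction from $\Vect_\bbk^\Sigma$ to endofunctors is strong monoidal, carrying the composite product of symmetric sequences to composition of endofunctors, for any symmetric monoidal cocomplete category, in particular for $\sfC$ and for~$\sfD$. Since $\scrM$ is augmented, I would take as candidate generators the symmetric sequence of indecomposables $\scrN\circ_\scrM\mathbbold{1}$, computed once and for all in $\Vect_\bbk^\Sigma$; the relative composite product is a reflexive coequalizer preserved by the cocontinuous Schur construction, so it realizes the same indecomposables over both $\sfC$ and~$\sfD$. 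Writing $\scrX$ for the corresponding analytic endofunctor, there is a canonical right-module map $\psi\colon\scrX\circ\scrM\to\scrN$ of symmetric sequences, and freeness of $\scrN$ over $\scrM$ in either category is equivalent to $\psi$ realizing to an isomorphism there.

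The PBW property over $\sfC$, via Theorem~\ref{th:PBWNat}, supplies a right-module isomorphism $\scrN\cong\scrX'\circ\scrM$ in $\END(\sfC)$; passing to indecomposables identifies $\scrX'$ with $\scrX$ and shows that $\psi$ is an isomorphism of analytic endofunctors of~$\sfC$. Because the Schur construction over $\sfC=\Vect_\bbk$ is conservative---one detects the arity-$n$ component by evaluating on a space of dimension at least~$n$, using characteristic zero---the underlying morphism of symmetric sequences $\psi$ is itself an isomorphism. An isomorphism of symmetric sequences realizes to an isomorphism of analytic endofunctors over $\sfD$ as well, so $\scrN$ is free as a right $\scrM$-module in $\END(\sfD)$, and a final application of Theorem~\ref{th:PBWNat} yields the PBW property over~$\sfD$.

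The main obstacle is the passage that upgrades freeness in $\END(\sfC)$---where Theorem~\ref{th:PBWNat} produces only some, a priori non-analytic, generating endofunctor---to an isomorphism of the underlying symmetric sequences. This is exactly where conservativity of the Schur construction over $\sfC$, equivalently the presence in $\sfC$ of objects large enough to detect every arity, is used; for the categories $\Vect_\bbk$ and their graded and super variants that are the intended instances of~$\sfC$, this is automatic, and it is the single place where the hypotheses on $\sfC$ genuinely enter.
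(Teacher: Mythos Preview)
Your proof is correct and follows essentially the same route as the paper's: reduce via Theorem~\ref{th:PBWNat} to freeness of $\scrN$ as a right $\scrM$-module, recover the generators as the indecomposables $\scrN\circ_\scrM\mathbbold{1}$ (which is analytic, being a colimit along analytic maps), and then transfer the resulting free-module isomorphism to~$\sfD$. The paper's own proof is a two-sentence sketch of precisely this argument.

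Your extra attention to conservativity of the Schur construction over~$\sfC$---needed to upgrade the free-module isomorphism, a priori only at the level of endofunctors of~$\sfC$, to an isomorphism of the underlying symmetric sequences---makes explicit a step that the paper's terse proof leaves implicit. This is a genuine clarification rather than a different method: without it, one has only that the canonical map $\psi\colon\scrX\circ\scrM\to\scrN$ realizes to an isomorphism of endofunctors of~$\sfC$, and some argument is required to conclude that $\psi$ is already an isomorphism in~$\Vect_\bbk^\Sigma$ and hence realizes to an isomorphism over~$\sfD$ as well.
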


\begin{proof}
The only remark to make is that for a free module $\scrX\circ\scrM$ the space of generators $\scrX$ can be recovered as the quotient by the right action of the augmentation ideal, hence the space of generators is also an analytic endofunctor of $\sfC$. An analytic endofunctor of $\sfC$ gives rise to an analytic endofunctor of~$\sfD$, and freeness for the enlarged category follows. 
\end{proof}

As a first application of this result, the PBW property for the morphism of operads $\Lie\to\Ass$ over a field $\bbk$ of characteristic $0$ implies that the same holds for associative algebras and Lie algebras in various symmetric monoidal categories that extend the category $\Vect_\bbk$; for example, this implies that the PBW theorem for Lie superalgebras (proved in~\cite{Ro65} and re-discovered in~\cite{CNS}) and the PBW theorem for twisted Lie algebras~\cite{Sto93} do not need to be proved separately, as already indicated by Bernstein's proof of the PBW theorem~\cite{DM} mentioned in the introduction.  

A slightly less obvious application for the same morphism of operads $\Lie\to\Ass$ is to the so called Leibniz algebras, the celebrated ``noncommutative version of Lie algebras''~\cite{Bl65}. Recall that a Leibniz algebra is a vector space with a bilinear operation $[-,-]$ without any symmetries satisfying the identity $[a_1,[a_2,a_3]]=[[a_1,a_2],a_3]-[[a_1,a_3],a_2]$. For a Leibniz algebra $L$, the space $L^2$ spanned by all squares~$[x,x]$ is easily seen to be an ideal, and the quotient $L/L^2$ has a natural structure of a Lie algebra. Moreover, it is known that the quotient map $L\to L/L^2$ is a Lie algebra in the symmetric monoidal ``category of linear maps'' of Loday and Pirashvili~\cite{LoPi}. In this category, by the classical PBW theorem, the underlying object of the universal enveloping algebra of $L\to L/L^2$ is isomorphic to 
 \[
S(L\to L/L^2)\cong \left(S(L/L^2)\otimes L\to S(L/L^2)\right) .
 \]
This gives a conceptual categorical explanation of appearance of the vector space $S(L/L^2)\otimes L$ in the context of universal enveloping algebras of Leibniz algebras~\cite[Th.~2.9]{LoPi93}. 

\subsection{The PBW non-theorem for Leibniz algebras}\label{sec:Dias} A well known instance where the direct image functor $\phi_!$ can be computed explicitly but depends on the algebra structure is the case of the morphism $\Leib\to\Dias$ from the aforementioned operad of Leibniz algebras to the symmetric operad of diassociative algebras. Here diassociative algebras refer to the algebraic structure introduced by Loday~\cite{Lo01} for the purpose of studying periodicity phenomena in algebraic K-theory; a diassociative algebra is a vector space with two bilinear operations $\vdash$ and $\dashv$ satisfying the identities
\begin{gather*}
(a_1\dashv a_2)\dashv a_3=a_1\dashv(a_2\dashv a_3),\quad  (a_1\dashv a_2)\dashv a_3=a_1\dashv(a_2\vdash a_3),\\
(a_1\vdash a_2)\dashv a_3=a_1\vdash(a_2\dashv a_3),\\
(a_1\dashv a_2)\vdash a_3=a_1\vdash(a_2\vdash a_3),\quad  (a_1\vdash a_2)\vdash a_3=a_1\vdash(a_2\vdash a_3) .
\end{gather*}
The morphism $\phi\colon\Leib\to\Dias$ is defined by the formula $\phi([a_1,a_2])=a_1\dashv a_2-a_2\vdash a_1$. In fact, this pair of operads and the morphism between them come from the morphism $\Lie\to\Ass$ via a certain endofunctor of the category of operads, the tensor product with the operad usually denoted by $\Perm$, see~\cite{Ch01}. It is known~\cite{Go01} that the universal enveloping diassociative algebra of a Leibniz algebra~$L$ is, as a vector space, isomorphic to the tensor product $S(L/L^2)\otimes L$ mentioned above, and hence very much depends on the Leibniz algebra structure of~$L$. (As we saw in Section~\ref{sec:Enlarge}, it happens because Leibniz algebras give rise to Lie algebras in a larger category where $L/L^2$ is included as a part of the object.) It is natural to ask what exactly breaks in this case if one attempts to mimic our proof of the classical PBW theorem. The associated graded operad of $\Dias$ with respect to the filtration defined by the Leibniz operation is easily seen to be generated by an operation $a_1,a_2\mapsto a_1\cdot a_2$ satisfying the identities of the operad $\Perm$ and an operation $a_1,a_2\mapsto [a_1,a_2]$ satisfying the Leibniz identity; these operations are related by several identities including 
 \[
[a_1, a_2\cdot a_3]=[a_1,a_2]\cdot a_3+[a_1,a_3]\cdot a_2 \quad\text{and}\quad [a_1\cdot a_2,a_3]=[a_1,a_3]\cdot a_2-a_1\cdot[a_3,a_2] .
 \]
Expanding the operadic monomial $[a_1\cdot a_2,a_3\cdot a_4]$ in two different ways, one obtains the identity
 \[
a_1\cdot [a_2,a_4]\cdot a_3+a_1\cdot[a_4,a_2]\cdot a_3=0 ,
 \]
showing that the right $\Leib$-module is not free, and that the obstruction to freeness does indeed arise from the symmetric part of the Leibniz bracket (that vanishes on the Lie level). This identity can be lifted to a slightly less appealing identity in the operad $\Dias$, which we do not include here. 

\subsection{Universal enveloping pre-Lie algebras of Lie algebras}

In a little known paper \cite{Se94}, a PBW type theorem is proved for universal enveloping pre-Lie algebras of Lie algebras. Let us explain how this result fits into our formalism. We denote the operad encoding pre-Lie algebras by~$\PL$.  It is well known that there exists a morphism of operads $\phi\colon\Lie\to\PL$ defined by $\phi([a_1,a_2])=a_1\cdot a_2-a_2\cdot a_1$.

\begin{proposition}\label{prop:LiePreLie}
The datum $(\Lie,\PL,\phi)$ has the PBW property.
\end{proposition}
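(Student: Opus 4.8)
By Theorem~\ref{th:PBWNat}, the datum $(\Lie,\PL,\phi)$ has the PBW property if and only if $\PL$ is free as a right $\Lie$-module via $\phi$. The plan is to establish this freeness by mimicking the strategy used for the morphism $\Lie\to\Ass$ in the proof of Theorem~\ref{th:PBW}, combined with the homological criterion of Proposition~\ref{prop:Homol}. First I would introduce the filtration on the operad $\PL$ by powers of the two-sided ideal generated by the image of the Lie bracket, namely by $a_1\cdot a_2-a_2\cdot a_1$. One then passes to the associated graded operad $\gr\PL$ and studies its structure.

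The main step is to identify the associated graded operad. By construction, $\gr\PL$ is generated by two binary operations: the symmetrized product $a_1\cdot a_2+a_2\cdot a_1$ (the leading term of the commutative part) and the Lie bracket $a_1\cdot a_2-a_2\cdot a_1$. These satisfy at least the defining relations of some operad built out of a commutative-type operation and a Lie bracket; the natural candidate, in analogy with the appearance of $\Poisson\cong\Com\circ\Lie$ in the associative case, is that $\gr\PL$ surjects onto an operad that is free as a right $\Lie$-module. Concretely, I expect $\gr\PL$ to be isomorphic (or to surject onto) an operad of the form $\scrX\circ\Lie$ for an explicitly describable endofunctor $\scrX$. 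Because the pre-Lie operad is well understood---$\dim\PL(n)=n^{n-1}$, the number of rooted trees on $n$ vertices---the dimension bookkeeping is the crucial consistency check: I would compare the generating function of $\dim\PL(n)$ with that of $\scrX\circ\Lie$, and if the candidate $\gr$ operad has the same dimensions in each arity, this forces the surjection onto $\scrX\circ\Lie$ to be an isomorphism, ruling out spurious relations exactly as in the $\Ass$ case.

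Having identified $\gr\PL$ as a free right $\Lie$-module, I would invoke Proposition~\ref{prop:Homol} together with the spectral sequence argument indicated after its proof: freeness of the associated graded module implies vanishing of the positive-degree homology of the bar construction $\mathsf{B}_\bullet(\gr\PL,\overline{\Lie},\mathbbold{1})$, and by the spectral sequence comparison this lifts to vanishing of the corresponding homology for $\PL$ itself. Hence $\PL$ is free as a right $\Lie$-module, with the space of generators $\scrX$ recovered as $H_0$, and the PBW property follows from Theorem~\ref{th:PBWNat}.

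The hard part will be the precise identification of the associated graded operad and the correct candidate $\scrX$. Unlike the associative case, where $\gr\Ass\cong\Poisson$ is classical and the generators are simply $\Com$, the pre-Lie setting has no equally off-the-shelf answer: one must determine exactly which operad governs the symmetrized pre-Lie product modulo the bracket, verify that it is indeed free over $\Lie$, and confirm via the dimension count $n^{n-1}$ that no additional relations survive in $\gr\PL$. Establishing this clean description---and checking there is no symmetric-part obstruction of the kind that spoils freeness in the Leibniz-to-diassociative case of Section~\ref{sec:Dias}---is where the genuine content lies; the homological lifting step is then essentially formal given the machinery already set up.
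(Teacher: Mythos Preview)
Your overall framework matches the paper's exactly: reduce via Theorem~\ref{th:PBWNat} to right $\Lie$-module freeness of $\PL$, filter $\PL$ by powers of the ideal generated by the Lie bracket, prove the associated graded is a free right $\Lie$-module, then lift via the spectral sequence argument attached to Proposition~\ref{prop:Homol}.

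Where your proposal and the paper diverge is in the hard step you correctly flag. The paper does \emph{not} pursue a dimension-count analogy with $\Poisson\cong\Com\circ\Lie$. Instead it invokes \cite{Dot17}, where the associated graded of $\PL$ with respect to precisely this filtration is analysed and an explicit basis of tree monomials is exhibited; the shape of those monomials then allows one to apply \cite[Th.~4(2)]{Dot09}, a shuffle-operad Gr\"obner-basis criterion for freeness as a right module. That settles the freeness of $\gr\PL$ over $\Lie$ without ever naming a closed-form $\scrX$ or comparing generating functions. Your dimension-counting route is not wrong in principle, but it cannot get off the ground until you know what $\gr\PL$ is and what the candidate $\scrX$ should be---and that structural identification is exactly the content of \cite{Dot17}. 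So the two approaches converge on needing that external input; the paper's path simply extracts freeness directly from the monomial basis, bypassing any separate generating-function bookkeeping.
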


\begin{proof}
We shall once again utilise the filtration argument, considering the filtration of the operad $\PL$ by powers of the two-sided ideal generated by the Lie bracket. In \cite{Dot17}, the associated graded operad was studied. Examining the proof of the main result of~\cite{Dot17}, we see that the associated graded operad is free as a right $\Lie$-module, since that proof exhibits an explicit basis of tree monomials in the associated graded operad, and the shape of those monomials allows to apply an argument identical to that of \cite[Th.~4(2)]{Dot09}. A standard spectral sequence argument completes the proof.
\end{proof}

It is interesting that $\PL$ is also free as a left $\Lie$-module, which was used in~\cite{Ch10} to establish that for a free pre-Lie algebra $L$, the result of change of algebra structure $\phi^*(L)$ is free as a Lie algebra.

\subsection{A new PBW theorem: solution to an open problem of Loday}

We conclude this paper with new PBW type result answering a question that Jean-Louis Loday asked the first author around~2009. Namely, the operad $\Dend$ of dendriform algebras admits a morphism from the operad $\PL$, which we shall recall below. It has been an open problem to prove a  PBW-type for dendriform universal enveloping algebras of pre-Lie algebras, which we do in this section. Since this paper was produced, an alternative proof (however without functoriality) was obtained by Gubarev \cite{Gu0}. In the same paper \cite{Gu0}, some PBW-type results involving post-Lie algebras are proved; their functorial versions are obtained, using rewriting theory for shuffle operads, in a separate note by the first author \cite{Dot19a}.

Recall that the dendriform operad~$\Dend$ is the operad with two binary generators denoted by $\prec$ and~$\succ$ that satisfy the identities
\begin{gather*}
(a_1\prec a_2)\prec a_3=a_1\prec(a_2\prec a_3+a_2\succ a_3),\\
(a_1\succ a_2)\prec a_3=a_1\succ(a_2\prec a_3),\\
(a_1\prec a_2+a_1\succ a_2)\succ a_3=a_1\succ(a_2\succ a_3).
\end{gather*}
In this section, we shall consider a different presentation of the operad~$\Dend$ via the operations 
 \[
a_1\circ a_2=a_1\prec a_2-a_2\succ a_1 \quad\text{ and } \quad a_1\cdot a_2=a_1\prec a_2+a_2\succ a_1. 
  \]
By a direct computation, all relations between these operations are consequences of the identities
\begin{gather*}
(a_1\circ a_2)\circ a_3-a_1\circ(a_2\circ a_3)=(a_1\circ a_3)\circ a_2-a_1\circ(a_3\circ a_2),\\
(a_1\cdot a_2)\cdot a_3=a_1\cdot (a_2\cdot a_3)+a_1\cdot(a_3\cdot a_2)-(a_1\circ a_3)\circ a_2,\\
(a_1\cdot a_2)\circ a_3=(a_1\circ a_3)\cdot a_2-a_1\cdot(a_2\circ a_3)+a_1\cdot (a_3\circ a_2),\\
(a_1\circ a_2)\cdot a_3+(a_1\circ a_3)\cdot a_2=a_1\circ (a_2\cdot a_3)+a_1\circ(a_3\cdot a_2).
\end{gather*}
In particular, this implies the well known statement that the operation $a_1\circ a_2=a_1\prec a_2-a_2\succ a_1$ satisfies the pre-Lie identity, so that there is a morphism $\phi\colon \PL\to\Dend$ sending the generator of $\PL$ to $a_1\prec a_2-a_2\succ a_1 $. We can now state the promised new PBW theorem. 

\begin{theorem}\label{th:PreLieDend}
The datum $(\PL,\Dend,\phi)$ has the PBW property.
\end{theorem}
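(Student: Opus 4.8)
The plan is to apply Theorem~\ref{th:PBWNat}, reducing the statement to the assertion that $\Dend$ is free as a right $\PL$-module via $\phi$. The whole point of introducing the alternative presentation of $\Dend$ through the operations $a_1\circ a_2=a_1\prec a_2-a_2\succ a_1$ and $a_1\cdot a_2=a_1\prec a_2+a_2\succ a_1$ is that $\circ$ is precisely the image of the pre-Lie generator, so I would mimic the filtration argument used for $(\Lie,\Ass)$ in Theorem~\ref{th:PBW} and for $(\Lie,\PL)$ in Proposition~\ref{prop:LiePreLie}. Concretely, I would filter $\Dend$ by powers of the two-sided ideal generated by $\circ$, assigning filtration degree equal to the number of occurrences of $\circ$ in an operadic monomial, and pass to the associated graded operad $\gr\Dend$.

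The next step is to identify $\gr\Dend$ from the four defining relations. Taking leading (lowest-filtration) parts: the first relation involves only $\circ$ and survives as the pre-Lie identity; the degree-zero part of the second relation drops the term $(a_1\circ a_3)\circ a_2$ and leaves $(a_1\cdot a_2)\cdot a_3=a_1\cdot(a_2\cdot a_3)+a_1\cdot(a_3\cdot a_2)$, which is exactly the Zinbiel identity for $\cdot$; and the third and fourth relations, all of whose terms carry a single $\circ$, survive unchanged as mixed relations governing the interaction of $\cdot$ and $\circ$. Thus $\gr\Dend$ is generated by a Zinbiel product $\cdot$ and a pre-Lie product $\circ$, with the right $\PL$-action given by $\circ$ and the $\circ$-free part contributed by the Zinbiel operation. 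One therefore expects $\gr\Dend\cong\Zinb\circ\PL$ as right $\PL$-modules, with generators $\scrX=\Zinb$; in particular the resulting PBW description identifies the underlying space of the universal enveloping dendriform algebra of a pre-Lie algebra with its free Zinbiel algebra.

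To establish freeness of $\gr\Dend$ I would follow the strategy of Proposition~\ref{prop:LiePreLie}: produce an explicit tree-monomial basis of $\gr\Dend$ by a Gröbner basis (rewriting) computation in the shuffle operad, choosing a monomial order for which the third and fourth relations become rewriting rules that normal-order each monomial into a Zinbiel word whose inputs are pre-Lie trees. This is exactly the \emph{shape} required so that the argument of \cite[Th.~4(2)]{Dot09} applies and yields freeness as a right $\PL$-module. A dimension count via generating functions then confirms that $\dim\gr\Dend(n)$ matches $\dim(\Zinb\circ\PL)(n)$, ruling out any hidden relations. Finally, the standard spectral sequence of the filtration, together with the homological criterion of Proposition~\ref{prop:Homol}, lifts freeness from $\gr\Dend$ to $\Dend$, and Theorem~\ref{th:PBWNat} gives the PBW property.

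The main obstacle is precisely the freeness of $\gr\Dend$, and within it the confluence analysis of the rewriting system coming from the mixed relations. The instructive warning is Section~\ref{sec:Dias}: there, expanding a monomial in two ways collapsed to the relation $a_1\cdot[a_2,a_4]\cdot a_3+a_1\cdot[a_4,a_2]\cdot a_3=0$, which destroyed freeness of $\Dias$ over $\Leib$. The delicate part here is to verify that the analogous ambiguities in $\gr\Dend$ resolve confluently \emph{without} producing any such collapsing identity among Zinbiel-normal monomials, so that the normal forms realise $\gr\Dend$ as the free module $\Zinb\circ\PL$ rather than a proper quotient; checking this confluence and the matching dimension count is where essentially all of the work lies.
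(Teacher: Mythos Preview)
Your overall strategy coincides with the paper's: filter $\Dend$ by powers of the ideal generated by~$\circ$, identify $\gr\Dend$ with an explicit quadratic operad, prove freeness over $\PL$ on the associated graded via shuffle Gr\"obner bases and \cite[Th.~4(2)]{Dot09}, and lift by the spectral sequence and Proposition~\ref{prop:Homol}. That part is fine.

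The genuine gap is your identification $\gr\Dend\cong\Zinb\circ\PL$: it is false already in arity~$3$. One has $\dim\Dend(3)=3!\cdot C_3=30$, whereas the exponential generating function of $\Zinb\circ\PL$ is $T(x)/(1-T(x))$ with $T(x)=x+x^2+\tfrac{3}{2}x^3+\cdots$, giving coefficient $\tfrac{9}{2}$ at $x^3$ and hence $\dim(\Zinb\circ\PL)(3)=27$. So there is no distributive law assembling the pre-Poisson operad from $\Zinb$ and $\PL$, the normal forms are \emph{not} ``Zinbiel words on pre-Lie trees'', and the dimension count you propose would fail rather than confirm your guess. If you set up the rewriting rules to force that particular normal form, the system will not be confluent.

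The paper sidesteps this by never guessing the generators. It names the operad presented by the leading-term relations the (right) pre-Poisson operad $\PP$, exhibits a quadratic Gr\"obner basis for the shuffle operad $\PP^f$ with respect to a carefully chosen ordering, and checks $\dim\PP^f(4)=336=\dim\Dend^f(4)$; a Polishchuk--Positselski argument then gives $\PP\cong\gr\Dend$. Right $\PL$-freeness is read off from the \emph{shape} of that Gr\"obner basis via \cite[Th.~4(2)]{Dot09}, without ever writing $\scrX$ explicitly. In short: keep your filtration/Gr\"obner/spectral-sequence plan, but drop the $\Zinb$ ansatz and instead let the Gr\"obner computation both determine the normal forms and certify freeness.
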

 
\begin{proof}
The proof of this theorem utilises the operad $\PP$ controlling pre-Poisson algebras \cite{Ag00} which we shall recall below. Since by the operad $\PL$ we always mean the operad controlling the \emph{right} pre-Lie algebras, we shall work with \emph{right} pre-Poisson algebras (opposite of those in~\cite{Ag00}).

\smallskip 

For the first step of the proof, we consider the filtration $F^\bullet\Dend$ of the operad $\Dend$ by powers of the two-sided ideal generated by the operation $a_1\circ a_2$. In the associated graded operad, the relations determined above become
\begin{gather*}
(a_1\circ a_2)\circ a_3-a_1\circ(a_2\circ a_3)=(a_1\circ a_3)\circ a_2-a_1\circ(a_3\circ a_2),\label{eq:PP1}\\
(a_1\cdot a_2)\cdot a_3=a_1\cdot (a_2\cdot a_3)+a_1\cdot(a_3\cdot a_2),\label{eq:PP2}\\
(a_1\cdot a_2)\circ a_3=(a_1\circ a_3)\cdot a_2-a_1\cdot(a_2\circ a_3)+a_1\cdot (a_3\circ a_2),\label{eq:PP3}\\
(a_1\circ a_2)\cdot a_3+(a_1\circ a_3)\cdot a_2=a_1\circ (a_2\cdot a_3)+a_1\circ(a_3\cdot a_2).\label{eq:PP4}
\end{gather*}
These are precisely the defining relations of the operad controlling right pre-Poisson algebras. Thus, the associated graded operad $\gr_F\Dend$ admits a surjective map from the operad $\PP$; this result is in agreement with \cite[Sec.~4]{Ag00} where it is shown that for a filtered dendriform algebra whose associated graded algebra is a Zinbiel algebra, that associated graded acquires a canonical pre-Poisson structure.

\smallskip 

We shall now look at the shuffle operad~$\PP^f$ associated to the operad $\PP$ via the usual forgetful functor~\cite{BrDo,DK}. It is generated by four elements $\cdot$, $\circ$, $\bdot$, $\birc$ which are the two operations and their opposites. We consider the ordering which is the superposition of the quantum monomial ordering \cite[Sec. 2]{Dot19b} for which every degree two monomial with $\cdot$ or $\bdot$ at the root and $\circ$ or $\birc$ at the non-root vertex is smaller than every degree two monomial with $\circ$ or $\birc$ at the root and $\cdot$ or $\bdot$ at the non-root vertex, and the path-lexicographic ordering induced by the ordering $\cdot<\bdot<\birc<\circ$. A slightly tedious computation shows for this choice of ordering this operad has a quadratic Gr\"obner basis; moreover, we have $\dim\PP^f(4)=336$. The surjection mentioned above leads to a surjection of vector spaces 
 \[
\PP^f(4)\twoheadrightarrow\Dend^f(4) ,
 \]
and if we note that $\dim\Dend^f(4)=4!\cdot 14=336$, we conclude that this surjection must be an isomorphism. In particular, when we pass from the operad $\Dend$ to its associated graded, no new cubic relations arise in the associated graded case (our operads are generated by binary operations,  so cubic elements live in arity~$4$). Repeating \emph{mutatis mutandis} the argument of \cite[Th.~7.1]{PP}, we see that the operad $\gr_F\Dend$ is quadratic, and $\PP\cong\gr_F\Dend$. 

\smallskip 

By direct inspection of our Gr\"obner basis of the operad~$\PP^f$, \cite[Th.~4(2)]{Dot09} applies, showing that this operad is free as a right $\PL^f$-module. By Proposition~\ref{prop:Homol} and a spectral sequence argument, the same is true for the operad~$\Dend$. 
\end{proof}

\begin{corollary}
The operad of pre-Poisson algebras is Koszul. 
\end{corollary}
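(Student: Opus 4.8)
The plan is to extract Koszulness directly from the Gr\"obner basis computation already carried out in the proof of Theorem~\ref{th:PreLieDend}. Recall that in that proof we established that the shuffle operad $\PP^f$ associated to the operad $\PP$ of pre-Poisson algebras admits a \emph{quadratic} Gr\"obner basis for the monomial ordering described there. The first step is therefore to invoke the operadic analogue of Priddy's criterion: a connected weight graded shuffle operad that possesses a quadratic Gr\"obner basis is Koszul (see~\cite{DK} and \cite{BrDo}). Applying this to $\PP^f$ immediately yields that $\PP^f$ is a Koszul shuffle operad.

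The second step is to transfer this conclusion from the shuffle operad back to the symmetric operad. Here one uses that the forgetful functor from symmetric operads to shuffle operads leaves the underlying graded vector spaces unchanged, so that acyclicity of the Koszul complex of $\PP$ is equivalent to acyclicity of the Koszul complex of $\PP^f$; over a field of characteristic zero this means precisely that $\PP$ is Koszul if and only if $\PP^f$ is (cf.~\cite{DK,LV}). Combining the two steps gives the claim.

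There is essentially no further obstacle to overcome: the only genuinely hard part of the argument --- checking that the Gr\"obner basis of $\PP^f$ is \emph{quadratic} rather than merely finite --- has already been dealt with in the proof of Theorem~\ref{th:PreLieDend}, where it was needed in order to identify $\gr_F\Dend$ with $\PP$. The Koszulness of $\PP$ is thus a free by-product of that computation, which is why we record it here as a corollary rather than as a separate result requiring its own analysis.
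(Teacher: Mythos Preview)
Your proof is correct and follows exactly the same route as the paper's: Koszulness is deduced immediately from the quadratic Gr\"obner basis for $\PP^f$ established in the proof of Theorem~\ref{th:PreLieDend}, via the Priddy-type criterion for shuffle operads and the fact that the forgetful functor detects Koszulness. The paper's proof is simply the one-sentence compression of your argument.
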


\begin{proof}
This follows immediately from the fact that the associated shuffle operad has a quadratic Gr\"obner basis.
\end{proof}

A similar method for proving Koszulness works for the appropriately-defined operad of pre-Gerstenhaber algebras and for its versions with generators of degrees $a$ and $b$. This fills a gap in the literature on homotopy algebras: in \cite{AAC17} and \cite{Al2015}, the notions of pre-Gerstenhaber algebras up to homotopy and pre-$(a,b)$-algebras up to homotopy were introduced, as algebras over the cobar construction of the Koszul dual cooperad. Such a definition only makes sense if one knows that the operads in question are Koszul, which is not checked in those papers. Fortunately, it turns out to be true, as our results indicate.

\end{document}